\documentclass[a4paper,11pt]{article}

\usepackage{a4wide}
\usepackage[utf8]{inputenc}
\usepackage[T1]{fontenc}
\usepackage[english]{babel}
\usepackage{amsmath}
\usepackage{amssymb}
\usepackage{url}
\usepackage{multirow}
\usepackage{amsfonts}
\usepackage{amsthm}
\usepackage{indentfirst}
\usepackage{bbm}
\usepackage{color}
\usepackage{bigints}
\usepackage{graphicx}

\numberwithin{equation}{section}





\newtheorem{theorem}{Theorem}[section]

\newtheorem{example}[theorem]{Example}
\newtheorem{remark}[theorem]{Remark}

\newcommand{\ii}{{\mathrm{i}}}

\newcommand{\phib}{\boldsymbol\phib}

\newcommand{\be}{\beta}
\newcommand{\la}{\lambda}
\newcommand{\al}{\alpha}
\newcommand{\p}{\mathbf{p}}
\newcommand{\I}{\mathbf{I}}
\newcommand{\A}{\mathbf{A}}

\newcommand{\om}{\omega_{\varphi}^{2\ell}}

\newcommand{\cond}{{\rm cond}}
\def\C{\mathcal{C}}

\def\RR{\vbox {\hbox{I\hskip-2.1pt R\hfil}}}
\def\NN{\vbox {\hbox{I\hskip-2.1pt N\hfil}}}
\def\PP{\vbox {\hbox{I\hskip-2.1pt P\hfil}}}

\def\XXint#1#2#3{{\setbox0=\hbox{$#1{#2#3}{\int}$}
     \vcenter{\hbox{$#2#3$}}\kern-.5\wd0}}

\begin{document}
\title{ On the numerical solution of Volterra integral equations \\ on equispaced nodes  \thanks{The authors are member of the INdAM Research group GNCS and the TAA-UMI Research Group. This research has been accomplished within ``Research ITalian network on Approximation'' (RITA).} }

\author{
Luisa Fermo\thanks{Department of Mathematics and Computer Science, University of Cagliari,  Via Ospedale 72, 09124, Italy (\tt fermo@unica.it)}
\and Domenico Mezzanotte \thanks{Department of Mathematics and Computer Science,
University of Basilicata, Viale dell'Ateneo Lucano 10, 85100
Potenza, Italy (\tt domenico.mezzanotte@unibas.it) }
\and Donatella Occorsio\thanks{Department of Mathematics and Computer Science, University of Basilicata, Viale dell'Ateneo Lucano 10, 85100 Potenza, Italy and
Istituto per le Applicazioni del Calcolo ``Mauro Picone'', Naples branch, C.N.R. National
Research Council of Italy, Via P. Castellino, 111, 80131 Napoli, Italy (\tt donatella.occorsio@unibas.it) }}

\maketitle

\begin{abstract}
In the present paper,  a Nystr\"om-type method for  second kind Volterra integral equations is introduced and studied. The method makes use of generalized  Bernstein polynomials, defined for continuous functions and based on equally spaced points. Stability and convergence are studied  in the space of continuous functions, and some numerical tests illustrate the performance of the proposed approach.

\medskip
\noindent{\bf Keywords}: Volterra integral equations, Nystr\"om method,  Generalized Bernstein polynomials.
\medskip

\noindent{\bf Mathematics Subject Classification:} 41A10, 65R20, 65D32
\end{abstract}

\section{Introduction}
This paper is concerned with the numerical treatment of the following kind of Volterra integral equations
\begin{equation}\label{Volterra}
f(s)+\mu \int_{0}^s k(t,s) f(t) (s-t)^\alpha t^\beta dt= g(s), \quad s\in (0,1],
\end{equation}
where $f$ is the function to determine, the function  $g$ is defined on $[0,\,1]$, the kernel $k$ is defined on $\mathcal{D}=\{(t,s): 0<t <s \leq 1 \}$, $\alpha,\beta > -1$, and $\mu \in \mathbb{R}$.

Several mathematical models arising in elasticity, scattering theory, seismology, heat conduction, fluid flow, chemical reactions, population dynamics, semi-conductors, and in other fields, involve  Volterra equations of the type \eqref{Volterra} (see e.g. \cite{Baker2000,Brunner2004,Brunner2017,Burova2021,FermoMee2021,Linz1985,McKee1988,Shaw1997}).

In light of such applications, a variety of numerical methods \cite{Baratella2004,Brunner1984,Costarelli2013,Fermoccorsio,Guo2014,Mandal2018,Sloan1976,Tang2008,Wei2019,Xie2012} have been developed  to approximate the solution $f$ in suitable spaces both in the case when the kernel is sufficiently smooth and in the case when is weakly singular. In order to treat the case of functions presenting algebraic singularities at the endpoints of the interval $[0,\,1]$ and /or on the boundary of $\mathcal{D}$, weighted global approximations methods have been recently introduced and studied  in  \cite{DF02022,FermoOccorsio2022}. However, some of these efficient numerical approaches  require the evaluation of the functions $g,k$ at the zeros of orthogonal polynomials. Typically,  the right-hand side $g$ is known in a set of data provided by instruments in equispaced points, and the kernel $k$,  usually representing the response of the experimental equipment, is also known on uniform grids. Hence, in such cases, the aforesaid mentioned methods are not reliable. 
On the other hand, methods based on piecewise polynomials can be used,  but they produce a low degree of approximation or more in general show saturation phenomena.

In this paper, we propose a Nystr\"om-type method based on the $\ell$-th iterated boolean sum of the classical Bernstein operator $B_m$ \cite{Felbecker,MO1977,Micchelli}. Fixed an integer $\ell\in \NN$, $B_{m,\ell}:=I-(I-B_m)^\ell$ maps continuous functions $f$ to polynomials of degree $m$, and  $\{B_{m,\ell}(f)\}_m$ are the so-called Generalized Bernstein polynomials of parameter $\ell$.  Like the classical polynomial $B_m(f)$, they require the samples of $f$ at $m+1$ equidistant points and interpolate $f$ at the extremes.  The main properties shared by this operators is offered by the additional parameter  $\ell$, since as $m \to \infty$, $B_{m,\ell}(f)$ converges uniformly to $f$ with a higher order of convergence with respect to the classical Bernstein polynomial sequence. In particular, the saturation order is $\mathcal{O}(m^{-\ell})$ \cite{MO1977,Micchelli} and the rate of convergence improves as the smoothness of $f$ increases \cite{Gonskazhou}.
Here, by employing   Generalized Bernstein polynomials to approximate the Volterra operator, we develop a Nystr\"om method based on it.
 Hence we prove stability and convergence of the method in the space of the continuous functions, determining error estimates in suitable Zygmund type subspaces. Furthermore,  the theoretical error estimates are corroborated  by   means of several examples, especially exploiting the use of  the parameter $\ell$  to speed the rate of convergence.


We point out that an analogous approach was proposed for Fredholm integral equations in \cite{OccorsioRusso2014} (see also \cite{ORTBern21}).

The paper is organized as follows. In Section \ref{sec:preliminaries}, we define the spaces in which we look for the solution of \eqref{Volterra} and we introduce the Generalized Bernstein operators with their properties and convergence results. In Section \ref{sec:Volterraop}, we propose a quadrature rule based on such polynomials and we give an estimate of convergence according to the smoothness properties of the kernel function. In Section \ref{sec:nystrom}, we present the Nystr\"om method, whose numerical performance is shown in Section \ref{sec:tests} through four examples. The proofs of the main results are given in Section \ref{sec:proofs}.
\section{Preliminaries}\label{sec:preliminaries}

In the sequel $\mathcal{C}$ denotes a positive constant  having different meanings in different formulas.  We  write $\C \neq \mathcal{C}(a,b,\ldots)$  to say that $\C$ is a positive constant independent of the parameters $a,b,\ldots$, and $\C = \C(a,b,\ldots)$ to say that
$\mathcal{C}$ depends on $a,b,\ldots$. If $A,B >0$ are quantities depending on some parameters, we write $A \sim B,$ if there exists a constant $\C\neq \C(A,B)$  such that $\C^{-1} B \leq A  \leq \C B.$

For any bivariate function $h(x,y)$, by $h_x$ ($h_y$) we refer to the function $h$ as a univariate function in only the variable $y$ ($x$).

For a given integer $m$, we use the notation $N_0^m:= \{0,1,2,\dots,m\}.$

\subsection{Function spaces}
Let $C^0:=C^0([0,1])$ be the space of all continuous functions on $[0,1]$ equipped with the uniform norm
$$\|f\|=\sup_{x \in [0,1]}|f(x)|,$$
and for each $f \in C^0$, let us introduce  the $r$-th Ditzian-Totik modulus of smoothness \cite{DT}
$$\omega^r_\varphi(f,t)=\sup_{0<h \leq t}\|\Delta^r_{h \varphi} f\|, \qquad r \in \mathbb{N},$$
where $\varphi(x)=\sqrt{x(1-x)}$ and  $\Delta^r_{h \varphi}$ denotes the finite differences with variable step-size given by
$$ \Delta^r_{h \varphi} f(x)= \sum_{k=0}^r (-1)^k \binom{r}{k} f \left(x+(r-2k) \frac{h}{2} \varphi(x) \right).
$$

It is well known  that the previous modulus of smoothness is connected to the error of best polynomial approximation $E_m(f)$ of $f \in C^0$, defined as
$$E_m(f)=\inf_{P \in \mathbb{P}_m} \|f-P\|, $$
where $\mathbb{P}_m$ is the set of all algebraic polynomials of degree at most $m$.
Indeed, one has \cite[Theorem 7.2.1 and Theorem 7.2.4]{DT}
\begin{align*}
E_m(f) &\leq \mathcal{C} \, \omega^r_\varphi\left(f, \frac{1}{m} \right), \quad \forall r<m, \qquad \mathcal{C} \neq \mathcal{C}(m,f) \\
\omega^r_\varphi(f,t) &\leq \mathcal{C} \, t^r \sum_{k= 0}^{[\frac{1}{t}]} (1+k)^{r-1} E_k(f), \quad \mathcal{C} \neq \mathcal{C}(t,f).
\end{align*}

For our aims, let us also define the H\"older-Zygmund type spaces as
$$
Z_\lambda= \left\{ f \in C^0: \, \sup_{t>0} \frac{\omega^r_\varphi(f,t)}{t^\lambda}<\infty, \, r>\lambda \right\},
$$
endowed with the norm
$$\|f\|_{Z_\lambda}=\|f\|+\sup_{t>0} \frac{\omega^r_\varphi(f,t)}{t^\lambda}.$$

It is proved that \cite[Theorem 2.1]{DT} for each $\lambda>0$ one has
$$
\|f\|_{Z_\lambda} \sim \|f\|+\sup_{m>0} (m+1)^\lambda E_m(f),
$$
from which we deduce the following relations which characterize any function $f \in Z_\lambda$
\begin{equation}
f \in Z_\lambda \iff E_m(f) = \mathcal{O}\left(\frac{1}{m^\lambda}\right),
\end{equation}
and
\begin{equation}\label{omegazyg}
\omega^r_\varphi(f,t)  \leq \mathcal{C} \, t^\lambda \, \|f\|_{Z_\lambda}, \quad \mathcal{C} \neq \mathcal{C}(f,t).
\end{equation}
\subsection{Generalized Bernstein polynomials}
In this paragraph, we recall the definition and the main properties of  the so-called  ``Generalized Bernstein operators'' introduced and studied in \cite{Felbecker,MO1977,Micchelli,ORTBern21}, and to whom we refer as GB operators.

For any $f \in C^0$, the $m$-th Bernstein polynomial $B_m(f)$ is defined as
\begin{equation} \label{espressionebernstein}
B_m(f,x)=\sum_{k=0}^m
p_{m,k}(x)f(t_k), \qquad x \in [0,\, 1],
\end{equation}
where
\begin{equation}\label{pmk}
p_{m,k}(x)=\binom m k x^k
(1-x)^{m-k}, \qquad t_k=\frac k m, \quad  \quad k\in N_0^m.
\end{equation}

Then, for a given $\ell\in \NN$, $\ell\ge 0$, the operator  $B_{m,\ell}:f\in C^0\to \PP_m$,   is defined as
\begin{equation}\label{Bml}
B_{m,\ell}=I-(I-B_m)^\ell,
 \quad \ell\in \mathbb{N},
\end{equation}
where $I$ is the identity  on $C^0$ and $B_m^i$  the $i$-th iterate of the Bernstein operator $B_m$, i.e.
\begin{equation*}
\begin{cases}
B_m^0:=I \\
B_m^i :=B_m(B_m^{i-1}), & i=1,\ldots,\ell.
\end{cases}
\end{equation*}
Obviously, from \eqref{Bml} it follows that the polynomial $B_{m,1}(f)=B_m (f)$ and, for each fixed $\ell \geq 1$,  one has
\begin{equation}\label{bmlapoly}
B_{m,\ell}(f,x)=\sum_{j=0}^m p_{m,j}^{(\ell)}(x)f(t_j), \quad x \in [0, \,1],\end{equation}
where
\begin{equation}\label{blendingfunctions}
p_{m,j}^{(\ell)}(x)=\sum_{i=1}^\ell \binom \ell i
(-1)^{i-1}B_m^{i-1}(p_{m,j}, x)
\end{equation}
are the {\em fundamental GB polynomials of degree $m$}. Such polynomials provide a partition of the unity i.e.
$$\sum_{j=0}^m p_{m,j}^{(\ell)}(x)=1, \quad \forall x \in [0,1].$$
Moreover, for any $f\in C^0$,
$$B_{m,\ell}(f,0)=f(0),\quad B_{m,\ell}(f,1)=f(1).$$

GB polynomials $B_{m,\ell}(f)$ detain the property that for each $m$ fixed and for $\ell \to \infty$,  uniformly converge  to the Lagrange polynomials $L_m (f)$ interpolating $f$ at the nodes $\{t_k\}_{k=0}^m$, i.e.
$$\lim_{\ell\to\infty}\|B_{m,\ell}(f)-L_m (f)\|=0,$$
being
$$L_m(f,x)= \sum_{k=0}^m l_{m,k}(x) f(t_k), \quad l_{m,k}(t_i)=\delta_{k,i}, $$
and with $\delta_{k,i}$ representing the Kronecker delta.

About the computation of the polynomials $B_{m,\ell}$, it is useful to note that setting $$\p_m^{(\ell)}(x):=[p_{m,0}^{(\ell)}(x),p_{m,1}^{(\ell)}(x),\dots, p_{m,m}^{(\ell)}(x)]^T,$$ and  $$\p_m(x):=[p_{m,0}(x),p_{m,1}(x), \dots,p_{m,m}(x)]^T, $$ the following vectorial expression holds true \cite{OccoSimon1996}
\begin{equation}\label{cambiodibase}
{\p_m^{(\ell)}(x)}^T=\p_m(x)^T C_{m,\ell}
\end{equation}
where  $C_{m,\ell}\in \mathbb{R}^{(m+1)\times(m+1)}$ is defined as
\begin{align}\label{matricecmk}
C_{m,\ell} & =\I+(\I-\A)+\ldots+(\I-\A)^{\ell-1} \nonumber \\ & =  \A^{-1}[\I-(\I-\A)^\ell] \nonumber \\ &=[\I-(\I-\A)^\ell]\A^{-1},
\end{align}
being  $\I$ the identity matrix of order $m+1$ and $\A\in \mathbb{R}^{(m+1)\times(m+1)}$ the matrix
\[(\A)_{i,j}=p_{m,j}(t_i),\quad  (i,j)\in N_0^m\times N_0^m.
\]

Moreover, for  $p=\log_2 \ell$ with $p\in \NN$,  the following  relation holds true
\begin{equation}\label{matrice_potenzedi2}
C_{m,2^p}=C_{m,2^{p-1}}+(\I-\A)^{2^{p-1}}C_{m,2^{p-1}},
\end{equation}
by which ones can deduce 
\begin{equation}\label{relazioneutile}
B_{m,2^p}(f,x)=2B_{m,2^{p-1}}(f,x)-  B_{m,2^{p-1}}^2(f,x).
\end{equation}
\eqref{relazioneutile} is useful for a fast computation of the subsequence $\{B_{m,2^p}\}_{p}$, $m$ fixed.

The following theorem shows the error approximation provided by the polynomials $B_{m,\ell}(f)$ for $ f\in C^0$, as $m\to \infty$ and  $\ell \in \mathbb{N}$.
\begin{theorem}\label{gonska}\cite[Th.  2.1 and Corollary]{Gonskazhou}
Let $\ell\in\NN$ {be} fixed. Then, for all $m\in\NN$ and for any $f\in C^0$,  {we have}
\begin{equation*}
\|f-B_{m,\ell}(f)\|\le \C \left\{
\omega_\varphi^{2\ell}\left(f,\frac 1 {\sqrt{m}}\right)+\frac{\|f\|} {m^\ell}\right\},\qquad C\neq \C(m,f).
\end{equation*}
Moreover, for any $0<\lambda\le 2\ell$ we {obtain}
\[
\|f-B_{m,\ell}(f)\|=\mathcal{O}\left(\frac{1}{\sqrt{m^{\lambda}}}\right), \ m\rightarrow\infty\ \Longleftrightarrow \
\omega_\varphi^{2\ell}(f,t)=\mathcal{O}(t^\lambda).
\]

\end{theorem}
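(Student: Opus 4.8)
The starting point is the identity $f-B_{m,\ell}(f)=(I-B_m)^{\ell}f$, which is immediate from \eqref{Bml}. To establish the first estimate I would run the classical decomposition argument, essentially along the lines of Gonska--Zhou \cite{Gonskazhou}. Recalling that $\omega^{2\ell}_\varphi(f,t)$ is equivalent to the Ditzian--Totik $K$-functional
$$K_{\varphi,2\ell}(f,t^{2\ell})=\inf_{g}\left\{\|f-g\|+t^{2\ell}\|\varphi^{2\ell}g^{(2\ell)}\|\right\},$$
for every $m$ I would pick $g=g_m\in C^{2\ell}$ almost realizing the infimum at $t=m^{-1/2}$, so that $\|f-g\|\le\C\,\omega^{2\ell}_\varphi(f,m^{-1/2})$ and $m^{-\ell}\|\varphi^{2\ell}g^{(2\ell)}\|\le\C\,\omega^{2\ell}_\varphi(f,m^{-1/2})$. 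Then I would split
$$\|(I-B_m)^{\ell}f\|\le\|(I-B_m)^{\ell}(f-g)\|+\|(I-B_m)^{\ell}g\|$$
and bound the first term by $\bigl(1+\|B_m\|_{C^0\to C^0}\bigr)^{\ell}\|f-g\|=2^{\ell}\|f-g\|$, since $B_m$ is a positive operator reproducing the constants, hence $\|B_m\|=1$. This term contributes only $\C\,\omega^{2\ell}_\varphi(f,m^{-1/2})$.

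The core of the proof would be the smooth-function estimate
$$\|(I-B_m)^{\ell}g\|\le\C\,m^{-\ell}\bigl(\|\varphi^{2\ell}g^{(2\ell)}\|+\|g\|\bigr),$$
which I would prove by induction on $\ell$. The base case $\ell=1$ is the weighted Voronovskaya-type bound $\|(I-B_m)h\|\le\C\,m^{-1}\|\varphi^{2}h''\|$: one expands $B_m(h,x)-h(x)$ by Taylor's formula with integral remainder at $x$, uses $B_m(t-x,x)=0$ and $B_m((t-x)^2,x)=\varphi^{2}(x)/m$, and controls the degeneracy of $\varphi$ near $0$ and $1$ by the standard Ditzian--Totik localization \cite{DT}. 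For the inductive step I would write $(I-B_m)^{\ell}g=(I-B_m)h$ with $h=(I-B_m)^{\ell-1}g$, apply the base case, and then estimate $\|\varphi^{2}h''\|$; this forces one to propagate bounds on the weighted norms $\|\varphi^{2j}\bigl((I-B_m)^{j}g\bigr)^{(2j)}\|$ through the iteration, which is done via the global smoothness-preservation (Bernstein--Markov) inequalities for $B_m$, of the type $\|\varphi^{2j}(B_mg)^{(2j)}\|\le\C\|\varphi^{2j}g^{(2j)}\|+\C\,m^{-1}(\text{lower-order weighted norms})$. The accumulation of these lower-order, endpoint-supported terms over the $\ell$ iterations — where the Markov inequality degrades like $m$ near $0,1$ — is precisely what produces the extra saturation term $m^{-\ell}\|g\|$, and hence $m^{-\ell}\|f\|$ after the triangle inequality. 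I expect this bookkeeping of weighted derivatives to be the main obstacle; once it is in place, combining the two contributions with $t=m^{-1/2}$ yields $\|f-B_{m,\ell}(f)\|\le\C\bigl\{\omega^{2\ell}_\varphi(f,m^{-1/2})+m^{-\ell}\|f\|\bigr\}$.

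For the equivalence, the implication ``$\Longleftarrow$'' is immediate: if $\omega^{2\ell}_\varphi(f,t)=\mathcal{O}(t^{\lambda})$ with $0<\lambda\le2\ell$, then $\omega^{2\ell}_\varphi(f,m^{-1/2})=\mathcal{O}(m^{-\lambda/2})$ and, since $\lambda/2\le\ell$, also $m^{-\ell}\|f\|=\mathcal{O}(m^{-\lambda/2})$, so the first estimate gives $\|f-B_{m,\ell}(f)\|=\mathcal{O}(m^{-\lambda/2})$. For the converse ``$\Longrightarrow$'' the naive inequality $E_m(f)\le\|f-B_{m,\ell}(f)\|$ together with the inverse bound for $\omega^{2\ell}_\varphi$ stated above is too weak — it would only give $\omega^{2\ell}_\varphi(f,t)=\mathcal{O}(t^{\lambda/2})$ — so I would instead establish a strong converse inequality of Ditzian--Ivanov type,
$$\omega^{2\ell}_\varphi(f,m^{-1/2})\le\C\sup_{n\ge c\,m}\|f-B_{n,\ell}(f)\|,$$
by combining the Bernstein inequality $\|\varphi^{2\ell}P^{(2\ell)}\|\le\C\,m^{2\ell}\|P\|$ for $P\in\mathbb{P}_m$ with the direct estimate just proved and a telescoping of the subsequence $\{B_{2^{k}m,\ell}(f)\}_k$. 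Inserting the hypothesis $\|f-B_{n,\ell}(f)\|=\mathcal{O}(n^{-\lambda/2})$ into this inequality produces $\omega^{2\ell}_\varphi(f,m^{-1/2})=\mathcal{O}(m^{-\lambda/2})$, i.e. $\omega^{2\ell}_\varphi(f,t)=\mathcal{O}(t^{\lambda})$; the restriction $\lambda\le2\ell$ is exactly the saturation barrier $m^{-\ell}$ already visible in the first estimate.
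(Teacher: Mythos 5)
The first thing to say is that the paper does not prove this statement at all: Theorem \ref{gonska} is imported verbatim from \cite{Gonskazhou} (as the citation in its header indicates), and Section \ref{sec:proofs} only proves Theorems \ref{teo:error} and \ref{teo:stability}. So there is no in-paper proof to compare against; what you have written is an attempted reconstruction of the argument of the cited reference, and it should be judged on those terms.

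As a reconstruction, your plan is on target: the identity $f-B_{m,\ell}(f)=(I-B_m)^{\ell}f$, the $K$-functional splitting with $\|(I-B_m)^{\ell}\|\le 2^{\ell}$, the smooth-function estimate $\|(I-B_m)^{\ell}g\|\le\C\,m^{-\ell}\bigl(\|\varphi^{2\ell}g^{(2\ell)}\|+\|g\|\bigr)$ as the core lemma, and the observation that the naive route $E_m(f)\le\|f-B_{m,\ell}(f)\|$ loses a factor of two in the exponent (so that a strong converse inequality adapted to $B_{m,\ell}$ is genuinely needed for the ``$\Longrightarrow$'' direction) are all the right ingredients, and the base case $\|(I-B_m)h\|\le m^{-1}\|\varphi^{2}h''\|$ is correctly identified. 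However, as a self-contained proof the proposal has two real gaps, both of which you flag yourself but do not close. First, the inductive step of the core lemma requires uniform-in-$m$ control of $\|\varphi^{2}\bigl((I-B_m)^{\ell-1}g\bigr)''\|$, i.e.\ weighted Bernstein--Markov inequalities for iterates of $B_m$ with constants that do not blow up under composition; this is the technical heart of \cite{Gonskazhou} and cannot be dismissed as bookkeeping, since it is exactly where the saturation term $m^{-\ell}\|g\|$ must be extracted without spoiling the leading term. Second, the strong converse inequality $\omega_\varphi^{2\ell}(f,m^{-1/2})\le\C\sup_{n\ge cm}\|f-B_{n,\ell}(f)\|$ is asserted, not derived; the telescoping-plus-Bernstein-inequality recipe you describe is the standard template, but making it work for the Boolean sums $B_{m,\ell}$ (rather than for $B_m$ itself) is a nontrivial result in its own right. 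In short: a sound and well-informed proof plan that correctly locates the difficulties, but not yet a proof --- which is presumably why the authors of the present paper simply cite \cite{Gonskazhou} rather than reproving the result.
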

\begin{remark}\label{remark1} Note that, unlike  the classical Bernstein operator $B_m$, the Boolean sums $B_{m,\ell}$  may accelerate the speed of convergence as the smoothness of $f$ increases. In particular, taking into account \eqref{omegazyg}, from Theorem~\ref{gonska}, we~deduce that for each $f\in Z_\lambda$ with $\lambda \leq 2 \ell$ we have
\begin{equation}\label{stima1}
\|f-B_{m,\ell}(f)\|\le \frac{\C}{\sqrt{m^\lambda}} \|f\|_{Z_\lambda},
\end{equation}
where $\mathcal{C} \neq \mathcal{C}(m,f)$.
\end{remark}

\section{On the approximation of the Volterra integral operator}\label{sec:Volterraop}
Let $\mathcal{V}:C^0 \to C^0$ be the linear Volterra integral operator of equation \eqref{Volterra} defined by
\begin{equation}\label{Vop}
(\mathcal{V}f)(s)=\int_{0}^s  k(t,s) f(t)  (s-t)^\alpha \, t^\beta \, dt,
\end{equation}
where $\alpha,\beta > -1$ and the function $k(t,s)$ is continuous on
$\mathcal{D}=\{(t,s): 0 < t < s \leq 1 \}$.

In order to provide an approximation for $\mathcal{V}$, let us express the function $k(t,s)f(t)$ in terms of the fundamental GB  polynomials through  \eqref{bmlapoly}, i.e.
\begin{equation}\label{Bern}
	B_{m,\ell}(k_sf,t)=\sum_{j=0}^{m} p_{m,j}^{(\ell)}(t) k_s\left(t_j\right) f\left(t_j\right), \qquad t_j=\frac{j}{m}.
\end{equation}
Then, for each fixed $\ell\in \NN$, we introduce the sequence $\{\mathcal{V}^{(\ell)}_m f\}_m$ defined as
\begin{equation}\label{Vmop}
(\mathcal{V}^{(\ell)}_m f)(s)=\sum_{j=0}^m Q^{(\ell)}_j(s) k_s(t_j) f(t_j),
\end{equation}
where 
\begin{equation}\label{coeff}
Q^{(\ell)}_j(s)=\int_0^s p^{(\ell)}_{m,j}(t) (s-t)^\alpha t^\beta dt.
\end{equation}
Now, assuming $\al+\be+1\ge 0$ and introduced the change of variable $t=sz$, by virtue of \eqref{cambiodibase} we have
\begin{align*}
Q^{(\ell)}_j(s)& = s^{\alpha+\beta+1} \int_0^1 p^{(\ell)}_{m,j}\left(z s\right)(1-z)^\alpha z^\beta dz \\ & = s^{\alpha +\beta+1} \sum_{r=0}^m (C_{m,\ell})_{r,j} \left(\int_0^1 p_{m,r}\left(z s\right) (1-z)^\alpha z^\beta dz \right).
\end{align*}
Now, concerning the computation of the above integrals, by using definition \eqref{pmk}, we could compute them analytically but this would require, for each fixed $r$, the evaluation of the regularized Hypergeometric Function that, together with that of binomial coefficients, is expensive.
For this reason, we propose to use a Gauss-Jacobi quadrature rule  based on the zeros of the orthonormal Jacobi polynomial $p_n^{\alpha,\beta}$, with respect to the weight $(1-z)^\alpha z^\beta.$  Hence,  denoting by $\{x_k^{\alpha,\beta} \}_{k=1}^{n}$ the zeros of $p_n^{\alpha,\beta}$ and by $\{\lambda_k^{\alpha,\beta}\}_{k=1}^n$ the corresponding Christoffel numbers, choosing $n=\left[\frac {m+2} 2\right]$ we have
$$
\int_0^1 p_{m,r}\left(z s\right) (1-z)^\alpha z^\beta dz = \sum_{k=1}^{n}\lambda_k^{\alpha,\beta}  \, p_{m,r}\left(x_k^{\alpha,\beta} s\right),
$$
i.e.  the quadrature rule is exact, and consequently
\begin{equation}\label{coeffQF}
Q^{(\ell)}_j(s)= s^{\alpha+\beta+1} \sum_{r=0}^m (C_{m,\ell})_{r,j}\sum_{k=1}^{n}\lambda_k^{\alpha,\beta}  p_{m,r}\left(x_k^{\alpha,\beta} s\right).
\end{equation}

In the next theorem we prove that for any $f\in C^0$ the sequence $\mathcal{V}^{(\ell)}_m f$ uniformly converges to $\mathcal{V} f$ as $m\to \infty$, for each fixed $\ell$, providing also an estimate of the error.
\begin{theorem}\label{teo:error}
Let  $f \in Z_{\la}$ and $\displaystyle \sup_{s \in [0,1]} \|k_s\|_{Z_\lambda}< \infty$. Assuming $\alpha+\beta+1 \geq 0$, then
\begin{equation}\label{stimaerr}
\|(\mathcal{V}- \mathcal{V}^{(\ell)}_m)f\|   \leq \mathcal{C} \,\sup_{s \in [0,1]} \|k_s\|_{Z_\lambda} \,  \left[ \frac{1}{(\sqrt{m})^{\lambda}} + \frac{1}{m^\ell}\right] \, \| f \|_{Z_\lambda} .
	\end{equation}
where $\C\neq\C(m,f)$.
\end{theorem}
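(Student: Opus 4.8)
The plan is to rewrite the quadrature error as the integral, against the Jacobi-type weight, of the pointwise generalized Bernstein (GB) approximation error of the integrand $k_s f$, and then to apply Theorem~\ref{gonska} together with the fact that $Z_\lambda$ is, up to a constant, a Banach algebra.

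First I would put $\mathcal{V}^{(\ell)}_m f$ in integral form: substituting \eqref{coeff} into \eqref{Vmop} and interchanging the finite sum with the integral, \eqref{Bern} gives
\[
(\mathcal{V}^{(\ell)}_m f)(s)=\int_0^s\Bigl(\sum_{j=0}^m p^{(\ell)}_{m,j}(t)\,k_s(t_j)f(t_j)\Bigr)(s-t)^\alpha t^\beta\,dt=\int_0^s B_{m,\ell}(k_sf,t)\,(s-t)^\alpha t^\beta\,dt,
\]
so that for every $s\in(0,1]$
\[
(\mathcal{V}-\mathcal{V}^{(\ell)}_m)f(s)=\int_0^s\bigl[k_s(t)f(t)-B_{m,\ell}(k_sf,t)\bigr](s-t)^\alpha t^\beta\,dt .
\]
Estimating the bracket by its sup-norm on $[0,1]\supseteq[0,s]$ and using the change of variable $t=sz$, one gets $\bigl|(\mathcal{V}-\mathcal{V}^{(\ell)}_m)f(s)\bigr|\le \|k_sf-B_{m,\ell}(k_sf)\|\, s^{\alpha+\beta+1}\int_0^1(1-z)^\alpha z^\beta\,dz$. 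Since $\alpha,\beta>-1$ the last integral is a finite constant depending only on $\alpha,\beta$, and since $\alpha+\beta+1\ge 0$ and $s\le 1$ we have $s^{\alpha+\beta+1}\le 1$; hence
\[
\|(\mathcal{V}-\mathcal{V}^{(\ell)}_m)f\|\le \mathcal{C}\,\sup_{s\in(0,1]}\|k_sf-B_{m,\ell}(k_sf)\|,\qquad \mathcal{C}\neq\mathcal{C}(m,f).
\]

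Next, for each fixed $s$ I would apply the GB error estimate to the continuous function $k_sf$. In the range $\lambda\le 2\ell$, Theorem~\ref{gonska} together with \eqref{omegazyg} (equivalently, estimate \eqref{stima1} of Remark~\ref{remark1}) gives
\[
\|k_sf-B_{m,\ell}(k_sf)\|\le \mathcal{C}\Bigl[\omega_\varphi^{2\ell}\bigl(k_sf,\tfrac1{\sqrt m}\bigr)+\tfrac{\|k_sf\|}{m^\ell}\Bigr]\le \mathcal{C}\Bigl[\frac{1}{(\sqrt m)^{\lambda}}+\frac{1}{m^\ell}\Bigr]\|k_sf\|_{Z_\lambda},
\]
with $\mathcal{C}$ independent of $m$, $f$ and $s$, the constant in Theorem~\ref{gonska} being uniform over $C^0$. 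It then remains only to control $\|k_sf\|_{Z_\lambda}$ by $\|k_s\|_{Z_\lambda}\|f\|_{Z_\lambda}$ and to take the supremum over $s$.

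The genuinely non-routine ingredient — and the step I expect to be the main obstacle — is the submultiplicative bound $\|k_sf\|_{Z_\lambda}\le \mathcal{C}\,\|k_s\|_{Z_\lambda}\|f\|_{Z_\lambda}$, $\mathcal{C}\neq\mathcal{C}(f,k_s,s)$, i.e. that $Z_\lambda$ behaves as a Banach algebra. I would prove it via the norm equivalence $\|g\|_{Z_\lambda}\sim\|g\|+\sup_{n}(n+1)^\lambda E_n(g)$ recalled in Section~\ref{sec:preliminaries}: taking $P,Q\in\mathbb{P}_{[n/2]}$ of best uniform approximation to $k_s$ and $f$ respectively, one has $PQ\in\mathbb{P}_n$ and
\[
E_n(k_sf)\le\|k_sf-PQ\|\le \|f\|\,E_{[n/2]}(k_s)+\|P\|\,E_{[n/2]}(f)\le \|f\|\,E_{[n/2]}(k_s)+\mathcal{C}\,\|k_s\|\,E_{[n/2]}(f);
\]
multiplying by $(n+1)^\lambda$, using $(n+1)^\lambda\le\mathcal{C}\,([n/2]+1)^\lambda$, taking the supremum over $n$ and adding $\|k_sf\|\le\|k_s\|\,\|f\|$ yields the claim. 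Combining this with the two previous displays and passing to $\sup_{s\in[0,1]}$ gives exactly \eqref{stimaerr}. Everything else is the Fubini-type manipulation of the quadrature and a direct citation of Theorem~\ref{gonska}. (A minor technical point: $k_s$, a priori defined only for $0<t<s$, must be understood as an element of $Z_\lambda([0,1])$, which is precisely what the hypothesis $\sup_s\|k_s\|_{Z_\lambda}<\infty$ encodes.)
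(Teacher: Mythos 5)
Your proposal follows essentially the same route as the paper: both express the quadrature error as the integral of the generalized Bernstein approximation error of $k_sf$ against the Jacobi-type weight, apply Theorem~\ref{gonska} together with \eqref{omegazyg}, and reduce to $\|k_sf\|_{Z_\lambda}\le\mathcal{C}\|k_s\|_{Z_\lambda}\|f\|_{Z_\lambda}$ before taking the supremum over $s$. The only difference is that you supply an explicit (and correct) best-approximation proof of the submultiplicative bound, a step the paper asserts without justification.
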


We state now two  numerical Tests, which  confirm the theoretical estimate \eqref{stimaerr}.

\begin{example}\label{example1}
\rm
Let us consider the integral
\begin{equation*}
(\mathcal{V} f) (s)=\int_{0}^{s} \sin{(st)} (s-t)^{\frac 1 4} t^{\frac 1 4 }\, dt
\end{equation*}
which is of the form \eqref{Vop} with $k(t,s)=\sin{(st)}$, $f \equiv 1$, $\alpha=\beta=1/4$. In Table \ref{ex1:table1} we report the absolute errors
\begin{equation}\label{error}
e_m^{\ell}(s)= |\mathcal{V} f (s)-\mathcal{V}_m^{\ell} f(s)|,
\end{equation}
for a fixed parameter $\ell=256$. Here, $I_m^{\ell}$ is as \eqref{Vmop} with $f \equiv 1$.
As we can see, for increasing values of $m$ and in different points $s$, the proposed approximation \eqref{Vmop} converges very fast to the exact value of the integral, since the function $k$ is very smooth. Investigating on the error with $m=64$ fixed, and varying the parameter $\ell$, we can again deduce  that the convergence is fast; see Table \ref{ex1:table2}.
\begin{table}[h!]\caption{Errors for Example \ref{example1} with $\ell=256$ \label{ex1:table1}}
	\centering
	\begin{tabular}{c|ccc}
		$m$ & $e_{m}^{(256)}(0.3)$ & $e_{m}^{(256)}(0.6)$ & $e_{m}^{(256)}(0.8)$\\ \hline
		4	&	3.86e-09	&	1.00e-07	&	6.44e-07	\\
		8	&	9.36e-16	&	3.08e-13	&	3.07e-12	\\
		16	&	7.37e-17	&	1.87e-16	&	5.27e-16	\\	
			\end{tabular}
\end{table}
\begin{table}[h!] \caption{Errors for Example \ref{example1} with $m=64$ \label{ex1:table2}}
	\centering
	\begin{tabular}{c|ccc}
		$\ell$ & $e_{64}^{(\ell)}(0.3)$ & $e_{64}^{(\ell)}(0.6)$ & $e_{64}^{(\ell)}(0.8)$ \\ \hline
		4 &	7.62e-11	&	1.04e-09	&	4.37e-10	\\
		8	&	4.03e-16	&	2.50e-14	&	6.96e-14	\\
		16	&	0.00e+00	&	5.55e-17	&	8.33e-17	\\

	\end{tabular}
\end{table}
\end{example}

\begin{example}\label{example2}
\rm
Let us consider the integral
\begin{equation*}
(\mathcal{V} f) (s)=\int_{0}^{s} e^{(s-t)^{3/2}} (s-t)^\frac 1 2\, dt
\end{equation*}
which is of the form \eqref{Vop} with $k(t,s)=e^{(s-t)^{3/2}}$, $f \equiv 1$, $\alpha=1/2$, and  $\beta=0$.

In Table \ref{ex1:table1} we report, for increasing values of $m$ and for three different points $s \in [0,1]$, the errors $e^{(\ell)}_m(s)$ defined in \eqref{error} with $\ell=256$. 
For this choice of $\ell$, the first term of the square bracket in \eqref{stimaerr} determines the magnitude of the error which is $m^{-3/4}$, since $k_s \in Z_{3/2}$. In Table \ref{ex1:table2}, we investigate on the error $e^{(\ell)}_m(s)$ for a fixed value of $m=512$ and for increasing value of $\ell$. Both tables confirm our theoretical convergence estimate. Moreover, the magnitude of the error does not depend on the point $s$ and by Table \ref{ex1:table2} we can see that for $m=512$ values of $\ell \geq 16$ do not improve the error.
\begin{table}[h]
\caption{Errors for Example \ref{example2} \label{ex2:table1} with $\ell=256$}
	\centering
	\begin{tabular}{c|ccc}
		$m$ & $e_{m}^{(256)}(0.2)$ & $e_{m}^{(256)}(0.5)$ & $e_{m}^{(256)}(0.7)$\\ \hline
		4	&	5.63e-04	&	1.11e-03	&	2.44e-04	\\
		8	&	2.02e-04	&	3.71e-04	&	6.58e-05	\\
		16	&	1.02e-04	&	2.64e-05	&	9.43e-06	\\
		32	&	1.08e-05	&	2.98e-06	&	1.66e-06	\\
		64	&	1.35e-06	&	7.78e-07	&	5.40e-07	\\
		128	&	1.05e-07	&	2.84e-07	&	1.90e-07	\\
		256	&	5.18e-08	&	9.75e-08	&	7.15e-08	\\
		512	&	1.79e-08	&	3.41e-08	&	2.53e-08	\\
		1024	&	6.30e-09	&	1.20e-08	&	9.05e-09\\	
	\end{tabular}
\end{table}
\begin{table}[h]
\caption{Errors for Example \ref{example2} \label{ex2:table2} with $m=512$}
	\centering
	\begin{tabular}{c|ccc}
		$\ell$ & $e_{512}^{(\ell)}(0.2)$ & $e_{512}^{(\ell)}(0.5)$ & $e_{512}^{(\ell)}(0.7)$\\ \hline
		4	&	1.58e-07	&	3.14e-07	&	2.44e-07	\\
		8	&	7.99e-08	&	1.56e-07	&	1.19e-07	\\
		16	&	5.06e-08	&	9.78e-08	&	7.42e-08	\\
		32	&	3.59e-08	&	6.91e-08	&	5.20e-08	\\
		64	&	2.73e-08	&	5.23e-08	&	3.92e-08	\\
		128	&	2.18e-08	&	4.16e-08	&	3.10e-08	\\
		256	&	1.79e-08	&	3.41e-08	&	2.53e-08	\\
		512	&	1.51e-08	&	2.87e-08	&	2.12e-08	\\
		1024	&	1.30e-08	&	2.46e-08	&	1.81e-08	\\
		2048	&	1.09e-08	&	2.14e-08	&	1.56e-08	\\
		4096	&	1.10e-08	&	1.89e-08	&	1.37e-08	\\
	\end{tabular}
\end{table}
\end{example}

\section{A Nystr\"om-type method}\label{sec:nystrom}
Now we are able to propose the Nystr\"om method based on the quadrature rule \eqref{Vmop}.

Denoted by $\mathcal{I}$ the identity operator and by $\mathcal{V}$ the operator given in \eqref{Vop}, equation \eqref{Volterra} can be written as
$$
(\mathcal{I}+\mu \mathcal{V})f=g.
$$
It is well known that it has a unique solution $f \in C^0$ for each given right-hand  $g \in C^0$ and for any $\mu \in \mathbb{R}$; see \cite{Brunner2004}.

In order to approximate such a solution, let us consider the
finite dimensional equation
\begin{equation}\label{op_equations}
(\mathcal{I}+\mu \, \mathcal{V}^{(\ell)}_m)f^{(\ell)}_m=g,
\end{equation}
where $f^{(\ell)}_m$ is the unknown and $\mathcal{V}_m^{(\ell)}$ is defined in \eqref{Vmop}.
By collocating \eqref{op_equations} at the points $s_i=\frac{i}{m}$, for $i=0,\dots,m$,  we get the linear system
\begin{equation}\label{system}
\sum_{j=0}^m \left[\delta_{ij} +\mu Q^{(\ell)}_j(s_i) k(t_j,s_i)\right] a_j=g(s_i), \qquad i=0,\dots,m
\end{equation}
where $a_j=f_m(t_j)$ are the unknowns and the coefficients $Q^{(\ell)}_j$ are given by \eqref{coeffQF}. System \eqref{system} is equivalent to \eqref{op_equations}. In fact, the solution ${\bf{a}}=(a^*_1,\dots,a^*_m)^T$ of \eqref{system} allow us to write the unique solution of \eqref{op_equations}, i.e. the so-called Nystrom interpolant
\begin{equation}\label{interpolant}
f^{(\ell)}_m(s)= g(s)- \mu \sum_{j=0}^m Q^{(\ell)}_j(s) k(t_j,s) a^*_j.
\end{equation}
Conversely, the latter provides a solution for system \eqref{system}. We just have to evaluate \eqref{interpolant} at the nodes $t_j=j/m$.
Next theorem states the stability and the convergence of the proposed method.
\begin{theorem}\label{teo:stability}
Consider the functional equation \eqref{op_equations} for a fixed parameter $\ell$. Then, for $m$ sufficiently large the operators $(\mathcal{I}+\mu \, \mathcal{V}^{(\ell)}_m)$ are invertible and their inverse are uniformly boundedw.r.t.$m$ on $C^0$. Moreover, denoted by $f^*$ the unique solution of \eqref{Vop}, if $g \in Z_\lambda$, $\displaystyle \sup_{s \in [0,1]} \|k_s\|_{ Z_\lambda}< \infty$, and $\alpha+\beta+1\geq0$, one has
\begin{equation}\label{stima}
\|f^*-f_m^{(\ell)}\| \leq \C  \,   \left[ \frac{1}{(\sqrt{m})^{\lambda}} + \frac{1}{m^\ell}\right]  \| f^* \|_{Z_\lambda} ,
\end{equation}
where $\C \neq \C(m,f^*)$.
\end{theorem}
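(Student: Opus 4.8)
The plan is to follow the classical analysis of Nyström methods based on collectively compact operator approximation theory, the quantitative rates being supplied only at the very end by Theorem~\ref{teo:error}. Set $\mathcal{K}=\mathcal{I}+\mu\mathcal{V}$ and $\mathcal{K}_m=\mathcal{I}+\mu\mathcal{V}_m^{(\ell)}$, so that $\mathcal{K}f^*=g=\mathcal{K}_m f_m^{(\ell)}$, and recall that $\mathcal{K}$ is invertible on $C^0$ with bounded inverse for every $\mu\in\mathbb{R}$ by the quasi-nilpotency of the weakly singular Volterra operator $\mathcal{V}$ (see \cite{Brunner2004}). The first step is the uniform bound $\sup_m\|\mathcal{V}_m^{(\ell)}\|_{C^0\to C^0}\le\mathcal{C}$. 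By \eqref{Bern}--\eqref{coeff} one has $(\mathcal{V}_m^{(\ell)}f)(s)=\int_0^s B_{m,\ell}(k_sf,t)\,(s-t)^\alpha t^\beta\,dt$, and since $\|B_{m,\ell}\|_{C^0\to C^0}\le 1+\|\mathcal{I}-B_m\|^\ell\le 1+2^\ell$ for all $m$, it follows that
\begin{equation*}
|(\mathcal{V}_m^{(\ell)}f)(s)|\le (1+2^\ell)\,\|k_s\|\,\|f\|\int_0^s (s-t)^\alpha t^\beta\,dt
=(1+2^\ell)\,\|k_s\|\,\|f\|\,s^{\alpha+\beta+1}\!\int_0^1(1-z)^\alpha z^\beta\,dz .
\end{equation*}
The assumption $\alpha+\beta+1\ge 0$ makes $s^{\alpha+\beta+1}\le 1$ on $[0,1]$, so $\|\mathcal{V}_m^{(\ell)}f\|\le\mathcal{C}\,\sup_{s}\|k_s\|\,\|f\|$ with $\mathcal{C}\neq\mathcal{C}(m)$; the same computation gives boundedness of $\mathcal{V}$, while its compactness on $C^0$ (for $\alpha,\beta>-1$) is classical.

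Next I would prove that $\mathcal{V}_m^{(\ell)}f\to\mathcal{V}f$ in $C^0$ for every $f\in C^0$ and that the family $\{\mathcal{V}_m^{(\ell)}\}_m$ is collectively compact. For the pointwise convergence, given $\varepsilon>0$ choose a polynomial $P$ with $\|f-P\|<\varepsilon$ and write $\|(\mathcal{V}-\mathcal{V}_m^{(\ell)})f\|\le(\|\mathcal{V}\|+\sup_m\|\mathcal{V}_m^{(\ell)}\|)\,\|f-P\|+\|(\mathcal{V}-\mathcal{V}_m^{(\ell)})P\|$: the first term is $\le\mathcal{C}\varepsilon$ by the previous step, while $P\in Z_\lambda$ trivially (being a polynomial, $E_k(P)=0$ for $k\ge\deg P$), so Theorem~\ref{teo:error} yields $\|(\mathcal{V}-\mathcal{V}_m^{(\ell)})P\|\le\mathcal{C}\,[\,(\sqrt m)^{-\lambda}+m^{-\ell}\,]\,\|P\|_{Z_\lambda}\to0$ as $m\to\infty$. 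For the collective compactness I would invoke Arzelà--Ascoli on $\{\mathcal{V}_m^{(\ell)}f:\|f\|\le1,\ m\in\mathbb{N}\}$: uniform boundedness is the first step, and equicontinuity follows by splitting the increment $(\mathcal{V}_m^{(\ell)}f)(s_1)-(\mathcal{V}_m^{(\ell)}f)(s_2)$ of $\int_0^s B_{m,\ell}(k_sf,t)(s-t)^\alpha t^\beta\,dt$ into the contributions coming from the integration limit, from the factor $(s-t)^\alpha$, and from the section $k_s$, each bounded uniformly in $m$ and in $\|f\|\le1$ by means of $\|B_{m,\ell}(k_sf)\|\le(1+2^\ell)\|k_s\|\,\|f\|$ and the absolute continuity of $\sigma\mapsto\int_0^\sigma(s-t)^\alpha t^\beta\,dt$.

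With these ingredients, stability follows from the standard theorem on collectively compact approximations: pointwise convergence together with collective compactness gives $\|(\mathcal{V}-\mathcal{V}_m^{(\ell)})\mathcal{V}\|\to0$ and $\|(\mathcal{V}-\mathcal{V}_m^{(\ell)})\mathcal{V}_m^{(\ell)}\|\to0$, whence, since $\mathcal{K}^{-1}$ exists and is bounded, the operators $\mathcal{K}_m$ are invertible for all $m$ large enough and
\[
\|\mathcal{K}_m^{-1}\|\le\frac{\|\mathcal{K}^{-1}\|\big(1+|\mu|\,\|\mathcal{K}^{-1}\|\,\sup_m\|\mathcal{V}_m^{(\ell)}\|\big)}{1-|\mu|\,\|\mathcal{K}^{-1}\|\,\|(\mathcal{V}-\mathcal{V}_m^{(\ell)})\mathcal{V}_m^{(\ell)}\|}\le\mathcal{C},\qquad \mathcal{C}\neq\mathcal{C}(m).
\]
Finally, from $\mathcal{K}_m f_m^{(\ell)}=\mathcal{K}f^*$ one gets $f^*-f_m^{(\ell)}=\mu\,\mathcal{K}_m^{-1}(\mathcal{V}_m^{(\ell)}-\mathcal{V})f^*$, hence $\|f^*-f_m^{(\ell)}\|\le|\mu|\,\sup_m\|\mathcal{K}_m^{-1}\|\,\|(\mathcal{V}-\mathcal{V}_m^{(\ell)})f^*\|$, and applying Theorem~\ref{teo:error} to $f^*$ (which lies in $Z_\lambda$ under the present hypotheses, by the regularity of the solution of \eqref{Volterra}, cf.\ \cite{Brunner2004}) gives precisely \eqref{stima}, the constant absorbing $|\mu|$, $\sup_m\|\mathcal{K}_m^{-1}\|$ and $\sup_{s}\|k_s\|_{Z_\lambda}$.

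The step I expect to be the main obstacle is the collective compactness, and within it the equicontinuity estimate: because $\mathcal{V}_m^{(\ell)}f$ is not a polynomial and the Jacobi weight is only weakly integrable near $t=s$ (when $\alpha<0$) and near $t=0$ (when $\beta<0$), one must control the three distinct sources of $s$-variation in $\int_0^s B_{m,\ell}(k_sf,t)(s-t)^\alpha t^\beta\,dt$ \emph{uniformly} in $m$; this is exactly where the bound $\|B_{m,\ell}\|\le1+2^\ell$ and the uniform integrability of the weight are essential, whereas the pointwise convergence, the abstract Anselone argument, and the final error identity are routine once Theorem~\ref{teo:error} is at hand.
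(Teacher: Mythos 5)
Your proposal is correct and follows essentially the same route as the paper: collectively compact operator approximation theory (Anselone/Atkinson) to get invertibility and uniform boundedness of $(\mathcal{I}+\mu\,\mathcal{V}_m^{(\ell)})^{-1}$, followed by the identity $f^*-f_m^{(\ell)}=\mu\,(\mathcal{I}+\mu\,\mathcal{V}_m^{(\ell)})^{-1}(\mathcal{V}_m^{(\ell)}-\mathcal{V})f^*$ and an application of Theorem~\ref{teo:error} to $f^*\in Z_\lambda$. The only difference is one of detail: the paper outsources the collective compactness and the convergence on all of $C^0$ to cited results (Kress, Atkinson), whereas you supply the uniform bound $\|B_{m,\ell}\|\le 1+2^\ell$, the density argument, and the Arzel\`a--Ascoli sketch explicitly.
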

\section{Numerical Tests}\label{sec:tests}
The aim of this section is to present some numerical examples to check the accuracy of the Nystr\"om method as well as the well conditioning of system \eqref{system}. The accuracy is measured by the errors
$$\epsilon_m^{(\ell)}(s)=|f^*(s)-f_m^{(\ell)}(s)|, \quad s \in (0,1],$$
where $\ell$ is fixed, $f^*$ is the exact solution of the given equation, and $f_m^{(\ell)}$ is defined in \eqref{interpolant}. When the exact solution is not known, we consider as exact the approximated solution $f_m^{(\ell)}$ with $m=1024$ and $\ell=256$. For each example, the errors are computed in three different point $s$ of $(0,1]$.

The well-conditioning of system \eqref{system} is testified by showing that, for increasing value of the size of the system, the condition number in infinity norm of the coefficient matrix $A$ of \eqref{system}
$$\cond(A)=\|A\|_\infty \, \|A^{-1}\|_\infty, \quad \textrm{where} \quad A_{i,j}=\delta_{ij} +\mu Q^{(\ell)}_j(s_i) k(t_j,s_i),$$
does not increase.

All the computed examples were carried out in  Matlab R2021b in double precision  on an Intel Core i7-2600 system (8 cores), under the Debian GNU/Linux operating system.

\begin{example}\label{exampleV1}
\rm
The first equation we consider is one in which the kernel and right-hand side are smooth functions. In Table \ref{exV1:table} we record the errors as well as the condition number of system \eqref{system} for increasing value of $m$  . As we can see, the convergence is fast, due to the high regularity of the known functions. Moreover, the condition number of system \eqref{system} does not depend on $m$. Figure \ref{exV1:figure1}  displays the maximum absolute errors attained over 512 equally spaces points of the interval $(0,1)$ when $m$ is fixed and the parameter $\ell$ varies. It aims at underlining that
the error improves for increasing values of $\ell$.
\begin{equation*}
f(s)+\frac{1}{2} \int_{0}^s \log{(t+s+2)} f(t) \sqrt{t} \, dt= \frac{\cos(s)}{s^2+2}
\end{equation*}
\begin{table}[h!]
	\caption{Numerical results for Example \ref{exampleV1} \label{exV1:table}}
	\centering
	\begin{tabular}{c|cccc}
		$m$ & $\epsilon_{m}^{(256)}(0.1)$ & $\epsilon_{m}^{(256)}(0.3)$ & $\epsilon_{m}^{(256)}(0.8)$ & $\cond(A)$ \\ \hline
		4 	 & 3.89e-06 	 & 1.12e-05 	 & 1.33e-05 	 & 1.78e+00 	\\
		8 	 & 3.44e-07 	 & 3.12e-07 	 & 3.38e-07 	 & 1.86e+00 	\\
		16 	 & 3.57e-08 	 & 3.71e-08 	 & 3.87e-08 	 & 1.89e+00 	\\
		32 	 & 4.53e-09 	 & 4.59e-09 	 & 4.75e-09 	 & 1.91e+00 	\\
		64 	 & 5.27e-10 	 & 5.79e-10 	 & 5.85e-10 	 & 1.92e+00 	\\
		128 	 & 6.66e-11 	 & 7.16e-11 	 & 7.25e-11 	 & 1.92e+00 	\\
		256 	 & 8.18e-12 	 & 8.82e-12 	 & 8.91e-12 	 & 1.92e+00 	\\
		512 	 & 9.06e-13 	 & 9.78e-13 	 & 9.89e-13 	 & 1.93e+00   \\ \hline
	\end{tabular}
\end{table}
\begin{figure}[h!]
\centering
\includegraphics[scale=0.65]{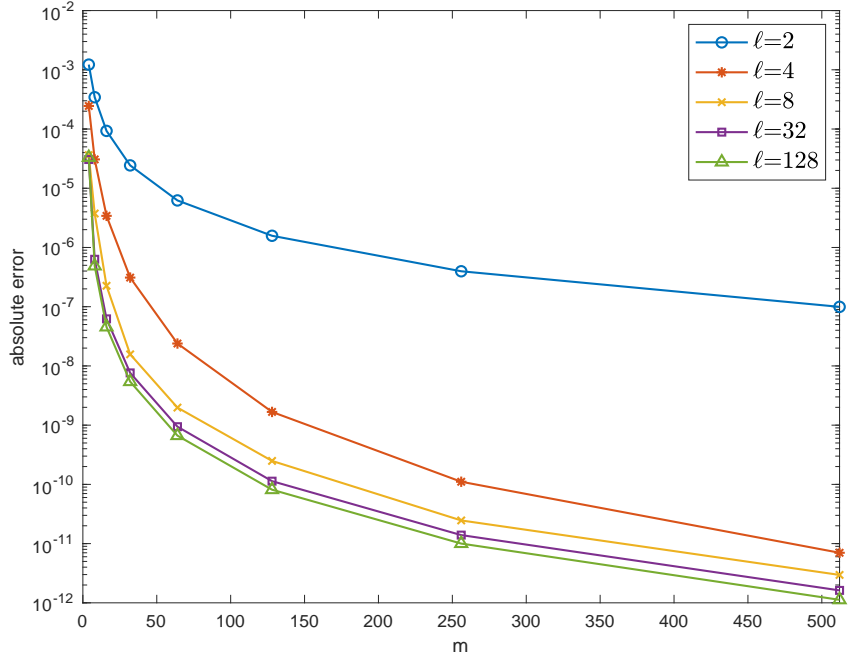}
\caption{Example \ref{exampleV1} - Plot of the maximum errors attained for different values of $\ell$ \label{exV1:figure1}}
\end{figure}
\end{example}

\begin{example}\label{exampleV2}
\rm
Let us approximate the solution of the following equation
\begin{equation*}
f(s)+ \int_{0}^s (t \, {\sin{s}})^{\frac{3}{2}} f(t) \, dt= s^2+3\tanh{(2s)}.
\end{equation*}
In this case we handle with a kernel belonging to $Z_{3/2}$ and a smooth right-hand side. Then, according to Theorem \ref{teo:stability} we expect a theoretical error $\mathcal{O}(m^{-3/4})$. However, the method goes faster than the attended speed of convergence as also confirmed by the estimated order of convergence
$$EOC^{\ell}_m(s)= \frac{\log{(\epsilon^{\ell}_m(s)/\epsilon^{\ell}_{2m}(s))}}{\log 2},$$
that we report in Table \ref{exV2:table} next to each error. The condition numbers reported in the last column confirms the well-conditioning of system \eqref{system}.

\begin{table}[h]
	\caption{Numerical results for Example \ref{exampleV2} \label{exV2:table}}
	\centering
	\begin{tabular}{c|cc|cc|cc|c}
		$m$ & $\epsilon_{m}^{(256)}(0.4)$ & $EOC_m$ &$\epsilon_{m}^{(256)}(0.7)$ & $EOC_m$ & $\epsilon_{m}^{(256)}(0.99)$ & $EOC_m$ & $\cond(A)$ \\ \hline
4 	 & 2.73e-04  & 8.07	 & 6.50e-04 & 8.26	 & 2.24e-04 & 5.61	 & 1.52e+00 	\\
8 	 & 1.02e-06 & 1.69 	 & 2.12e-06 & 1.73	 & 4.58e-06 & 2.48	 & 1.59e+00 \\	
16 	 & 3.15e-07 & 3.65	 & 6.38e-07 & 3.66 	 & 8.20e-07 & 3.67 	 & 1.63e+00 \\	
32 	 & 2.51e-08 & 3.57 	 & 5.05e-08 & 3.56 	 & 6.44e-08 & 3.56 	 & 1.65e+00 \\	
64 	 & 2.11e-09 & 3.53 	 & 4.28e-09 & 3.53 	 & 5.45e-09 & 3.53 	 & 1.66e+00 	\\
128 	 & 1.82e-10 & 3.53 	 & 3.70e-10 & 3.53 	 & 4.71e-10 & 3.53 	 & 1.66e+00 \\	
256 	 & 1.58e-11 & 3.63 	 & 3.21e-11 & 3.63 	 & 4.09e-11 & 3.62 	 & 1.66e+00 \\	
512 	 & 1.28e-12 &	 & 2.60e-12 &	 & 3.31e-12 &	 & 1.66e+00 \\
\hline	
	\end{tabular}
\end{table}
\end{example}

\begin{example}\label{exampleV3}
\rm
In this example we consider an equation in which the kernel is smooth but the right-hand side has a low smoothness
\begin{equation*}
f(s)+2 \int_{0}^s (t+s+2) f(t) \sqrt[3]{(s-t)t} \, dt= s^{5/2}, \quad s\in (0,1).
\end{equation*}
Table \ref{exV3:table} illustrates the errors. They are smaller than the attended theoretical results according to which errors behave like $\mathcal{O}(m^{-5/4})$. In Figure \ref{exV3:fig} we illustrate the approximated solution for different values of $m$.
\begin{table}[h]
	\caption{Numerical results for Example \ref{exampleV3} \label{exV3:table}}
	\centering
	\begin{tabular}{c|cccc}
		$m$ & $\epsilon_{m}^{(256)}(0.01)$ & $\epsilon_{m}^{(256)}(0.5)$ & $\epsilon_{m}^{(256)}(0.99)$ & $\cond(A)$ \\ \hline
		4 	 & 2.14e-07 	 & 7.83e-05 	 & 7.97e-04 	 & 8.62e+00 	\\
		8 	 & 2.04e-08 	 & 2.45e-06 	 & 1.20e-06 	 & 9.87e+00 	\\
		16 	 & 4.12e-09 	 & 1.10e-07 	 & 1.11e-08 	 & 1.01e+01 	\\
		32 	 & 9.08e-10 	 & 6.93e-09 	 & 5.02e-10 	 & 1.03e+01 	\\
		64 	 & 1.47e-10 	 & 4.70e-10 	 & 3.23e-11 	 & 1.04e+01 	\\
		128 	 & 1.39e-11 	 & 3.22e-11 	 & 2.15e-12 	 & 1.04e+01 	\\
		256 	 & 8.94e-13 	 & 2.23e-12 	 & 1.37e-13 	 & 1.04e+01 	\\
		512 	 & 6.14e-14 	 & 1.45e-13 	 & 1.04e-14 	 & 1.04e+01 	\\ \hline
	\end{tabular}
\end{table}
\begin{figure}[h!]
\centering
\includegraphics[scale=0.65]{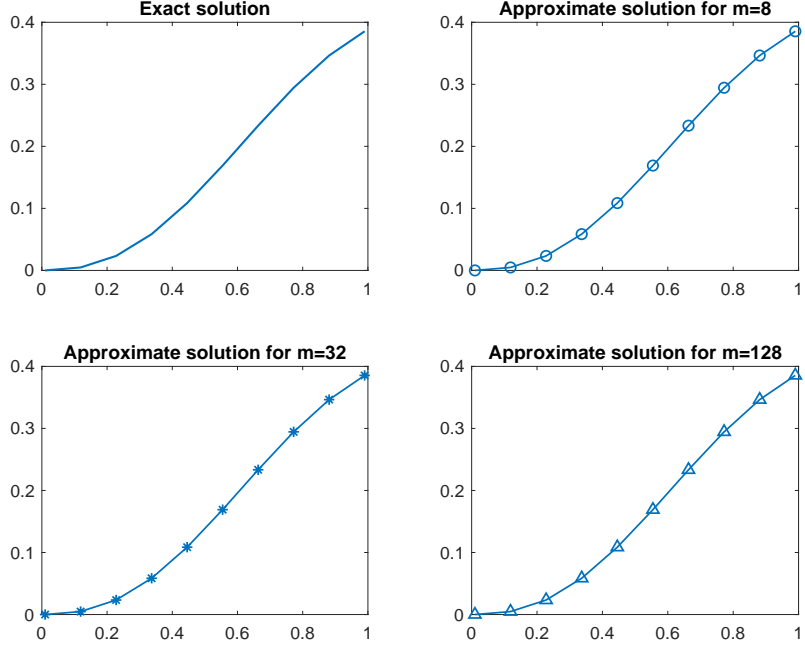}
\caption{Example \ref{exampleV3} - Plot of the solutions $f^{(256)}_{1024}, \; f^{(256)}_8, \; f^{(256)}_{32}, \; f^{(256)}_{128}$ \label{exV3:fig}}
\end{figure}
\end{example}
\begin{example}\label{exampleV4}
\rm
As the last example, we consider an equation that arises in the  direct scattering problem for the initial value problem associated to the Korteweg-de Vries (KdV) equation \cite[Section 6]{FermoMee2021}
\begin{equation}
\begin{cases}
q_t-6q q_x +q_{xxx}=0, \quad x \in \mathbb{R}, \quad t>0 \\
q(x,0)=q_0(x).
\end{cases}
\end{equation}
The equation is the following
\begin{equation}\label{VolterraAppl}
f(s)- \frac{1}{2 \ii \omega^2} \int_{0}^s q_0(s) (e^{2\ii \omega (s-t)}-1) f(t) dt= q_0(s), \quad s\in (0,1], \quad \omega\in \RR, \quad \ii=\sqrt{-1}.
\end{equation}
In Table \ref{exV4:table}, we report the absolute errors in the case $\omega=10$ and $q_0=1$ is the well-known square-well potential. Since all the involved functions are analytic, machine precision is easily achieved. Furthermore, system \eqref{system} is well-conditioned, being the condition number always the same for increasing values of $m$.
\begin{table}[h]
	\caption{Numerical results for Example \ref{exampleV4} \label{exV4:table}}
	\centering
	\begin{tabular}{c|cccc}
		$m$ & $\epsilon_{m}^{(256)}(0.01)$ & $\epsilon_{m}^{(256)}(0.5)$ & $\epsilon_{m}^{(256)}(0.99)$ & $\cond(A)$ \\ \hline
	4 	 & 6.57e-06 	 & 2.38e-03 	 & 1.31e-03 	 & 1.01e+00 \\	
8 	 & 4.27e-05 	 & 9.48e-04 	 & 1.09e-03 	 & 1.01e+00 	\\
16 	 & 5.51e-06 	 & 3.07e-05 	 & 2.75e-05 	 & 1.01e+00 	\\
32 	 & 2.91e-08 	 & 3.75e-08 	 & 3.46e-08 	 & 1.01e+00 	\\
64 	 & 1.65e-12 	 & 3.59e-12 	 & 1.60e-12 	 & 1.01e+00 	\\
128 	 & 1.06e-19 	 & 1.12e-16 	 & 5.98e-17 	 & 1.01e+00 	\\
	\end{tabular}
\end{table}
\end{example}

\section{Proofs}\label{sec:proofs}
\begin{proof}[Proof of Theorem \ref{teo:error}]
By definitions \eqref{Vop} and \eqref{Vmop},  we can write
\begin{align*}
| (\mathcal{V}f)(s)-(\mathcal{V}_m^{(\ell)}f)(s)|&=\int_{0}^{s} \left[ (fk_s)(t)-B_{m,\ell}(fk_s,t)\right](s-t)^\alpha t^\beta\,dt\\
&=\int_{0}^{1} \left[ (fk_s)(t)-B_{m,\ell}(fk_s,t)\right]\widetilde{v}(t)\,dt
\end{align*}
where
\begin{equation}
\widetilde{v}(t):=
\begin{cases}
(s-t)^\alpha t^\beta & s> t\\
0  & s<t.
\end{cases}
\end{equation}
Then, by virtue of Theorem \ref{gonska}, we can claim that
\begin{align*}
| (\mathcal{V}f)(s)-(\mathcal{V}_m^{(\ell)}f)(s)| &\leq \mathcal{C} \left[ \om \left( fk_s;\frac{1}{\sqrt{m}}\right)+\frac{\lVert fk_s \rVert}{m^\ell} \right]  \int_{0}^{1}\widetilde{v}(t)\,dt\\
&= \mathcal{C} s^{\alpha+\beta+1}  \left[ \omega_{\varphi}^{2\ell}\left( fk_s;\frac{1}{\sqrt{m}}\right)+\frac{\lVert fk_s \rVert}{m^\ell} \right] \int_{0}^{1} (1-x)^\alpha x^\beta \, dx.
\end{align*}
By the assumptions, we can deduce that the product $fk_s$ is a function of the space $Z_\lambda$,  so that by \eqref{omegazyg} and being $\alpha+\beta+1 \geq 0$ , we get
\begin{align*}
| (\mathcal{V}f)(s)-(\mathcal{V}_m^{(\ell)}f)(s)| &\leq \mathcal{C} s^{\alpha+\beta+1}  \left[ \frac{\| f k_s \|_{Z_\lambda}}{(\sqrt{m})^{\lambda}} + \frac{\|f k_s\|}{m^\ell}\right] \\ & \leq \mathcal{C} \,s^{\alpha+\beta+1} \, \|k_s\|_{Z_\lambda} \, \| f \|_{Z_\lambda} \, \left[ \frac{1}{(\sqrt{m})^{\lambda}} + \frac{1}{m^\ell}\right],
\end{align*}
namely the assertion, after taking the supremum of $s$ at both sides.
\end{proof}
\begin{proof}[Proof of Theorem \ref{teo:stability}]
By virtue of \eqref{teo:error} the quadrature rule we use in the Nystr\"om method is convergent  and then the sequence $\{\mathcal{V}_m^{(\ell)}\}_m$ is collectively compact \cite[Theorem 12.8]{K}  (see also \cite{Orsi1996}). Then for each fixed $\ell$ \cite[p. 114]{Atkinson}
$$ \lim_{m \to \infty} \|(\mathcal{V}-\mathcal{V}_m^{(\ell)})\mathcal{V}_m^{(\ell)}\|=0,$$
and from \cite[Theorem 4.1.1 p. 106]{Atkinson} we can deduce that the operators
$$(\mathcal{I}+\mu \mathcal{V}_m^{(\ell)})^{-1}:C^0 \to C^0,$$
exist and are uniformly bounded with respect to $m$.
About the error estimate, we first note that by assumption on $g$ and $k_s$ we can deduce that the solution $f^*$ of equation \eqref{Vop} is at least in $Z_\lambda$. Then, by virtue of \cite[Theorem 4.1.1 p. 106]{Atkinson} and Theorem \ref{teo:error}, one has
\begin{align*}
\|f^*-f_m^{(\ell)}\| & \leq \C \, \|(\mathcal{V}-\mathcal{V}_m^{(\ell)})f^*\| \\ & \leq \mathcal{C} \,\sup_{s \in [0,1]} \|k_s\|_{Z_\lambda} \,  \left[ \frac{1}{(\sqrt{m})^{\lambda}} + \frac{1}{m^\ell}\right] \, \| f^* \|_{Z_\lambda},
\end{align*}
that is \eqref{stima}.
\end{proof}
	
\bibliographystyle{plain}      
\bibliography{biblio}
\end{document}